\newtheorem{theorem}{Theorem}[section]
\newtheorem{conjecture}{Conjecture}[section]
\newtheorem{corollary}{Corollary}[section]
\title{Properties of Prime Products and Robin's Inequality}
\author{William McCann}
\date{February 2020}
\begin{document}

\maketitle

\begin{abstract}

There are many formulations of problems that have been proven to be equivalent to the Riemann Hypothesis in modern mathematics. In this paper we look at the formulation of an inequality derived by Robin in 1984\cite{robin1984grandes} that proves the Riemann Hypothesis true if and only if Robin's Inequality is true for all $n\geq 5040$. In this paper we look at explicit products of prime numbers and show that if a given $n=p_1^{k_1}p_2^{k_2}\ldots p_m^{k_m}$ satisfies Robin's Inequality, then $n=P_1^{k_1}P_2^{k_2}\ldots P_m^{k_m}$ with $p_j \leq P_j$ also satisfies the inequality. We also then offer two conjectures that if proven individually could imply some interesting properties of the Riemann Hypothesis and Robin's Inequality.

\end{abstract}

\section{Introduction}

    The Riemann Hypothesis is one of the greatest unsolved problems still open in our time. In its classic formulation, the hypothesis claims that all
    non-trivial zeros of the analytic continuation of the Riemann Zeta function
    $$
    \zeta (s) = \sum_{n=1}^\infty \dfrac{1}{n^s}
    $$
    occur only on the critical line $s=\frac{1}{2}+ib$. 
    
    While this formulation is certainly the most famous (or infamous depending on which burned out 
    mathematician you ask) there have been many other equivalent problems discovered over the years. In this paper we will be looking at one of these
    equivalent formulations: Robin's Inequality. According to Robin's Theorem, the Riemann Hypothesis is true if and only if the following inequality 
    $$
    \sigma (n) < e^\gamma n\log (\log(n))
    $$
    holds for all $n>5040$\cite{robin1984grandes}, where $\gamma=.5772\ldots$ is the Euler-Mascheroni Constant \cite{weisstein2002euler} and $\sigma(n)$ is the sum of divisors function. 
    
    Since this formulation was first published by Robin in $1984$, several interesting developments have occurred, such as the discovery that the Riemann Hypothesis is true if and only if all numbers divisible by a $5$th power number satisfy Robin's Inequality\cite{JTNB_2007__19_2_357_0}. Most discoveries are similar to this in that they consider a given $n$ and its respective prime factors. However, in this paper we consider products of primes and come up with some small, fun results.
    
    Before looking at our results, we must first look at some formulas that we will be using. The first is a closed form product for the sum of divisors function $\sigma(n)$. Given that $n=p_1^{k_1}p_2^{k_2}\ldots p_m^{k_m}$ where $p_1,\ldots,p_m$ are prime numbers, we can write $\sigma(n)$ as 
    $$
    \prod_{j=1}^m\dfrac{p_j^{k_j+1}-1}{p_j-1}.\cite{berndt2012ramanujan}
    $$
    If we plug this back in to the inequality we find that
    $$
    \prod_{j=1}^m\dfrac{p_j^{k_j+1}-1}{p_j-1} < e^\gamma n\log (\log(n)) 
    $$
    which if we plug back in that $n=p_1^{k_1}p_2^{k_2}\ldots p_m^{k_m}$ we can say that
    $$
    \prod_{j=1}^m\dfrac{p_j^{k_j+1}-1}{p_j-1} < e^\gamma\prod_{j=1}^m p_j^{k_j}\log \left(\log\left(\prod_{j=1}^m p_j^{k_j}\right)\right).
    $$
    Performing a little be of algebra we can divide both sides by $n$ and show that Robin's inequality is equivilent to 
    $$
    \prod_{j=1}^m\dfrac{p_j^{k_j+1}-1}{p_j^{k_j}\left(p_j-1\right)} < e^\gamma\log \left(\log\left(\prod_{j=1}^m p_j^{k_j}\right)\right) \equiv e^\gamma\log \left(\sum_{j=1}^m k_j\log( p_j)\right).
    $$
    Now with these formulations we can begin to prove some of the theorems presented in this paper.
    
\section{Robin's Inequality and Prime Products}

We begin with a small toy example to get ourselves acquainted with some of the ideas we will be considering in this paper. Consider a number that is a power of a single prime $n=p^k > 5040$. In this case both sides of the inequality are simplified down significantly to make the following theorem fairly easy. 

\begin{theorem}
    If $n>5040$ is of the form $n=p^k$, where $p$ is prime, then Robin's Inequality holds
\end{theorem}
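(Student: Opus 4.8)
The plan is to specialize the equivalent form of Robin's Inequality derived in the introduction to the single-prime case $m = 1$, $p_1 = p$, $k_1 = k$, which reads
$$
\frac{p^{k+1}-1}{p^k(p-1)} < e^\gamma \log\bigl(k \log p\bigr).
$$
Everything then reduces to bounding the left-hand side from above by a constant and the right-hand side from below using the hypothesis $n = p^k > 5040$, so that the two bounds can be compared directly.

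For the left-hand side, I would first rewrite it as a truncated geometric series,
$$
\frac{p^{k+1}-1}{p^k(p-1)} = \frac{\sigma(p^k)}{p^k} = \sum_{i=0}^{k} \frac{1}{p^i} < \sum_{i=0}^{\infty} \frac{1}{p^i} = \frac{p}{p-1} \leq 2,
$$
where the final inequality uses that $\frac{p}{p-1}$ is decreasing in $p$ and hence maximized at $p = 2$. Thus the left-hand side is strictly below $2$ for every prime $p$ and every exponent $k$, with no dependence on how large $k$ is.

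For the right-hand side, since $\log\log n$ is increasing in $n$, the hypothesis $n = p^k > 5040$ forces $\log p^k = k \log p > \log 5040$, and therefore
$$
e^\gamma \log(k \log p) > e^\gamma \log(\log 5040) \approx 1.781 \cdot \log(8.525) \approx 3.8.
$$
Combining the two estimates yields $\text{LHS} < 2 < 3.8 < \text{RHS}$, which is exactly the inequality to be established.

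The only genuine computation is the numerical verification that $e^\gamma \log(\log 5040) > 2$; everything else is an elementary geometric-series bound. I expect no real obstacle here, since the margin between the constant upper bound $2$ and the lower bound near $3.8$ is comfortable enough that sharper estimates are unnecessary, and the worst case $p = 2$ is automatically absorbed by this margin.
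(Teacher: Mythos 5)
Your proposal is correct and follows essentially the same route as the paper's own proof: both bound the left-hand side above by $2$ via $\frac{p^{k+1}-1}{p^k(p-1)} < \frac{p}{p-1} \leq 2$ (the paper splits the fraction algebraically where you sum a geometric series, a cosmetic difference), and both bound the right-hand side below by $e^\gamma \log\log(5040) \approx 3.817$ using monotonicity and the hypothesis $n > 5040$. No gaps; your geometric-series phrasing is if anything slightly cleaner.
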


\begin{proof}
    Using our expression of Robin's Inequality we know
    $$
    \prod_{j=1}^m\dfrac{p_j^{k_j+1}-1}{p_j^{k_j}\left(p_j-1\right)} < e^\gamma\log \left(\log\left(\prod_{j=1}^m p_j^{k_j}\right)\right)
    $$
    which is equal to
    $$
        \dfrac{p^{k+1}-1}{p^k(p-1)} < e^\gamma \log(\log(p^k))
    $$
    since $n=p^k$. If we break up the fraction, we can find that
    $$
        \dfrac{p}{p-1} - \dfrac{1}{p^k(p-1)} < e^\gamma \log(\log(p^k)).
    $$
    Since $p\geq 2$, the quantity on the left hand side is less than $2$ for all possible primes, which makes our expression
    $$
        2 < e^\gamma \log(\log(p^k)).
    $$
    Our right hand side is a monotonically increasing function, so if we can show that this inequality holds true for $5040$, then it must hold true in general for $p^k > 5040$
    $$
        2 < e^\gamma \log\log(5040) \approx 3.817 < e^\gamma \log(\log(p^k))
    $$
    therefore our inequality holds for all primes of the given form.
\end{proof}

As noted earlier, this proof was fairly simple, and has been shown before in some capacity. However, the next result is one that we are interested in personally, as it opens up a large class of valid numbers for Robin's inequality.

\begin{theorem}
    If Robin's Inequality holds for $n = p_1^{k_1}p_2^{k_2}\dots p_m^{k_m}$ where $p_1,p_2,\ldots,p_m$ are $m$ distinct prime 
    numbers, then the inequality holds for any $n$ of the form $n = P_1^{k_1}P_2^{k_2}\dots P_m^{k_m}$ where $p_j \leq P_j$ 
\end{theorem}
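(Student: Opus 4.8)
The plan is to treat the two sides of Robin's Inequality separately and show that, under the replacement $p_j \mapsto P_j$ with $p_j \leq P_j$ (and the exponents $k_j$ held fixed), the left-hand side cannot increase while the right-hand side cannot decrease. Chaining these two facts together with the hypothesis for $n$ then delivers the inequality for $N = P_1^{k_1}P_2^{k_2}\dots P_m^{k_m}$.

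First I would rewrite a single factor of the left-hand side as a finite geometric sum. For a prime $p$ and fixed exponent $k$,
$$
\dfrac{p^{k+1}-1}{p^k(p-1)} = \dfrac{1}{p^k}\cdot\dfrac{p^{k+1}-1}{p-1} = \sum_{i=0}^{k} \dfrac{1}{p^i}.
$$
Written this way it is immediate that the factor is a decreasing function of $p$: the constant term is $1$, while every remaining summand $p^{-i}$ with $i\geq 1$ shrinks when the base is raised from $p_j$ to $P_j \geq p_j$. Since each factor of the product is positive and each one weakly decreases under the replacement, the whole left-hand side evaluated at the $P_j$ is at most its value at the $p_j$,
$$
\prod_{j=1}^m \dfrac{P_j^{k_j+1}-1}{P_j^{k_j}(P_j-1)} \leq \prod_{j=1}^m \dfrac{p_j^{k_j+1}-1}{p_j^{k_j}(p_j-1)}.
$$

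Second, I would dispatch the right-hand side by monotonicity of $\log\log$. Because $p_j \leq P_j$ for every $j$ with the same exponents, we get $N = \prod_j P_j^{k_j} \geq \prod_j p_j^{k_j} = n$, and since $x\mapsto \log(\log(x))$ is increasing for $x>1$ this yields $e^\gamma\log(\log(n)) \leq e^\gamma\log(\log(N))$. Splicing the left-hand bound, the assumed inequality for $n$, and this right-hand bound together produces the chain
$$
\prod_{j=1}^m \dfrac{P_j^{k_j+1}-1}{P_j^{k_j}(P_j-1)} \leq \prod_{j=1}^m \dfrac{p_j^{k_j+1}-1}{p_j^{k_j}(p_j-1)} < e^\gamma\log(\log(n)) \leq e^\gamma\log(\log(N)),
$$
which is precisely Robin's Inequality for $N$.

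The whole argument is two monotonicity observations glued by the hypothesis, so there is no heavy computation to grind through. The one step carrying all the content — and the only place a sign error could creep in — is recognizing the geometric-sum form $\sum_{i=0}^{k} p^{-i}$ of each factor and reading off that it decreases in $p$; once that is secured, the behavior of the right-hand side follows from the elementary fact that $N \geq n$ forces $\log(\log(N)) \geq \log(\log(n))$.
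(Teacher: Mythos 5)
Your proof is correct, and it diverges from the paper's at exactly the one step that carries the content: showing each factor $\frac{p^{k+1}-1}{p^k(p-1)}$ decreases in $p$. The paper treats $p_s$ as a continuous variable and differentiates, obtaining $\frac{\partial}{\partial p_s}\left(\frac{p_s^{k_s+1}-1}{p_s^{k_s}(p_s-1)}\right) = \frac{k_s(p_s-1)-p_s(p_s^{k_s}-1)}{p_s^{k_s+1}(p_s-1)^2}$, and then needs a second differentiation of the numerator plus an evaluation at $p_s=1$ to pin down its sign; it also first reduces to increasing one prime at a time before chaining. Your geometric-sum identity $\frac{p^{k+1}-1}{p^k(p-1)} = \sum_{i=0}^{k} p^{-i}$ makes the monotonicity immediate term by term, requires no calculus and no sign analysis, and lets you replace all the primes simultaneously, so the whole theorem collapses to a single chain of three inequalities. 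What the paper's calculus route buys is an explicit strict decrease over real $p>1$, a style of perturbation argument that would also be the natural tool for their Conjecture 3.2 on incrementing exponents; what your route buys is brevity, transparency (the identity also explains the bound $\frac{p^{k+1}-1}{p^k(p-1)} < \frac{p}{p-1}$ that the paper's corollaries rely on, since the finite sum is dominated by the full geometric series), and immunity to the bookkeeping slips the paper's version actually contains (it says ``R.H.S.'' where it means the left-hand side, and drops the subscript in writing $(k+1)p_s - k$ for $(k_s+1)p_s - k_s$). One small shared caveat: both proofs implicitly assume the $P_j$ remain distinct primes so that the product formula for $\sigma$ applies to $N$; that deserves a sentence in either write-up, but it is not a gap in yours relative to the paper's.
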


\begin{proof}
    Notice that we only need to show that for a particular $n'=p_1^{k_1}\ldots P_j^{k_j}\ldots p_m^{k_m}$ is valid for $p_j \leq P_j$, as if that is valid, we can continually increase every other prime factor individually and have the resultant number still be valid. As such we show that for a single increased prime the inequality still holds.
    
    Suppose we have $n= p_1^{k_1}p_2^{k_2}\dots p_m^{k_m}$, then our formulation of Robin's inequality is as follows
    $$
        \prod_{j=1}^m\dfrac{p_j^{k_j+1}-1}{p_j^{k_j}\left(p_j-1\right)} < e^\gamma\log \left(\log\left(\prod_{j=1}^m p_j^{k_j}\right)\right).
    $$
    Now we will show that if this inequality holds, and we increase any specific $p_j$ the R.H.S. will increase, and the L.H.S. will decrease which implies the inequality still holds. 
    
    Trivially, since $\ln x$ is a monotonically increasing function, if we increase the value of any $p_j$, since $p_j > 1$ then we know that the value of $e^\gamma \log\log\left(p_1^{k_1}p_2^{k_2}\dots p_m^{k_m}\right)$ must also increase.
    
    Now we take the R.H.S. and differentiate it with respect to any arbitrary $p_s$, note the change in index labels to avoid confusion
    
    $$
        \dfrac{\partial}{\partial p_s}\prod_{j=0}^m \dfrac{p_j^{k_j+1}-1}{p_j^{k_j}(p_j-1)} = \dfrac{\partial}{\partial p_s}\left(\dfrac{p_s^{k_s+1}-1}{p_s^{k_s}(p_s-1)}\right)\cdot\prod_{\substack{j=0 \\ j\neq s}}^m \dfrac{p_j^{k_j+1}-1}{p_j^{k_j}(p_j-1)}.
    $$
    Notice that the value of 
    $$
    \prod_{\substack{j=0 \\ j\neq s}}^m \dfrac{p_j^{k_j+1}-1}{p_j^{k_j}(p_j-1)}
    $$
    is strictly positive. Therefore it is sufficient to show that 
    $$
        \dfrac{\partial}{\partial p_s}\left(\dfrac{p_s^{k_s+1}-1}{p_s^{k_s}(p_s-1)}\right) < 0
    $$
    for the entire derivative to be negative. Taking the derivative we find
    $$
        \dfrac{\partial}{\partial p_s}\left(\dfrac{p_s^{k_s+1}-1}{p_s^{k_s}(p_s-1)}\right) = \dfrac{k_s(p_s-1)-p_s(p_s^{k_s}-1)}{p_s^{k_s+1}(p_s-1)^2} < 0.
    $$
    Since $p_s > 1$, the bottom quantity is greater than $0$, so this implies
    $$
        k_s(p_s-1)-p_s(p_s^{k_s}-1) < 0 \equiv -p_s^{k_s+1} + (k+1)p_s - k < 0
    $$
    if we then take the derivative of this quantity we get that
    $$
        \dfrac{\partial}{\partial p_s}(-p_s^{k_s+1} + (k+1)p_s - k) = (k+1)(1-p_s^{k_s}).
    $$
    This second derivative of the numerator is negative for all values of $p_s > 1$. Since our derivative numerator is $0$ at $p_s = 1$, we can now say that our derivative is negative for all values of $p_s > 1$. 
    
    Therefore by increasing the value of any arbitrary $p_s$ will decrease the value of the L.H.S. of our inequality. Since we have decreased the L.H.S. and increased the R.H.S. of the inequality we assumed to be true, then the resultant inequality must also be true.
\end{proof}

Now lets apply this Theorem to a small toy application similar to the first theorem that we proved earlier. Again this is not super interesting, however it is a good insight into some potential thoughts of what could be done with this Theorem.

\begin{corollary}
    All numbers of the form $n=p_1^{k_1}p_2^{k_2} > 5040$ satisfy Robin's inequality
\end{corollary}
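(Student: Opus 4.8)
The plan is to bound the left-hand side of the reformulated inequality by the absolute constant $3$ and then show the right-hand side exceeds $3$ for every $n > 5040$. First I would write $n = p_1^{k_1}p_2^{k_2}$ with $p_1 < p_2$ distinct primes; the degenerate case $p_1 = p_2$ collapses to a single prime power and is already covered by Theorem 2.1. Applying the same splitting used in the proof of Theorem 2.1 to each of the two factors,
$$
\frac{p_j^{k_j+1}-1}{p_j^{k_j}(p_j-1)} = \frac{p_j}{p_j-1} - \frac{1}{p_j^{k_j}(p_j-1)} < \frac{p_j}{p_j-1},
$$
so the left-hand side is strictly smaller than $\frac{p_1}{p_1-1}\cdot\frac{p_2}{p_2-1}$, with the exponents $k_1,k_2$ dropping out entirely.

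The key step is to note that $\frac{p}{p-1}$ is decreasing in $p$ and that, because the two primes are distinct, $p_1 \geq 2$ and $p_2 \geq 3$. This yields the uniform bound
$$
\frac{p_1}{p_1-1}\cdot\frac{p_2}{p_2-1} \leq \frac{2}{1}\cdot\frac{3}{2} = 3,
$$
hence left-hand side $< 3$. I would then finish by reusing the endpoint estimate from Theorem 2.1: since $e^\gamma\log\log n$ is monotonically increasing and $e^\gamma\log\log 5040 \approx 3.817 > 3$, the right-hand side exceeds $3$ throughout $n > 5040$, and chaining the two bounds gives Robin's inequality.

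I do not expect a genuine obstacle here, but the one point that needs care is the exploitation of distinctness. The crude estimate $\frac{p_j}{p_j-1} < 2$ applied to both factors only gives left-hand side $< 4$, and $4$ is not dominated by $e^\gamma\log\log n$ across the whole range $n > 5040$ (it would require roughly $n > 12700$); it is precisely the fact that the larger prime is at least $3$ that pushes the constant down to the usable value $3$. One could instead try to route the argument through Theorem 2.2, observing that for fixed exponents the left-hand side is largest at the smallest admissible primes $2$ and $3$. However, Theorem 2.2 alone does not close the proof, since the base number $2^{k_1}3^{k_2}$ may itself lie below $5040$, leaving Robin's inequality unverified there; the uniform constant bound above sidesteps this difficulty completely.
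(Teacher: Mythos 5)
Your proof is correct, and while it opens with the same decomposition as the paper --- bounding each factor $\frac{p_j^{k_j+1}-1}{p_j^{k_j}(p_j-1)}$ by $\frac{p_j}{p_j-1}$ so that the exponents drop out, then comparing against $e^\gamma\log\log 5040 \approx 3.817$ --- it closes the argument by a genuinely different and in fact sounder route. The paper verifies the constant bound only at $p_1=2$, $p_2=3$ and then invokes Theorem 2.2 to pass from $2^{k_1}3^{k_2}$ to arbitrary distinct primes; you instead use the monotonicity of $p/(p-1)$ directly, getting the uniform bound $\frac{p_1}{p_1-1}\cdot\frac{p_2}{p_2-1} \leq \frac{2}{1}\cdot\frac{3}{2} = 3$ for all distinct primes at once, with no appeal to Theorem 2.2. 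The objection you raise against the Theorem 2.2 route is not hypothetical: it is a real gap in the paper's own proof. Theorem 2.2 requires Robin's inequality to hold at the base number $2^{k_1}3^{k_2}$, but the paper only establishes it for such $n > 5040$, and the base can fall below that threshold while the target does not --- e.g.\ $n = 7^2\cdot 11^2 = 5929 > 5040$ has base $2^2\cdot 3^2 = 36$, where Robin's inequality actually \emph{fails}, since $\sigma(36) = 91 > e^\gamma\cdot 36\log\log 36 \approx 81.8$. Your uniform constant bound sidesteps this entirely (as would citing only the L.H.S.-decreasing half of Theorem 2.2's proof together with the $n > 5040$ bound on the R.H.S., which is essentially what your monotonicity step does by hand). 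Your observation that the naive bound $\frac{p_j}{p_j-1} < 2$ on both factors gives only $4$, and that distinctness ($p_2 \geq 3$) is what pushes the constant down to $3 < 3.817$, correctly identifies the one place the argument could have failed.
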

\begin{proof}
    Starting with our form of the inequality we have
    $$
        \dfrac{p_1^{k_1+1}-1}{p_1^{k_1}(p_1-1)}\dfrac{p_2^{k_2+1}-1}{p_2^{k_2+2}(p_2-1)} < e^\gamma \ln\ln\left(p_1^{k_1}p_2^{k_2}\right)
    $$
    and using the following inequalities we can show that the inequality holds regardless of values of $k_1,k_2$
    $$
        \dfrac{p_1^{k_1+1}-1}{p_1^{k_1}(p_1-1)}\dfrac{p_2^{k_2+1}-1}{p_2^{k_2+2}(p_2-1)} < \dfrac{p_1^{k_1+1}}{p_1^{k_1}(p_1-1)}\dfrac{p_2^{k_2+1}}{p_2^{k_2+2}(p_2-1)} = \dfrac{p_1p_2}{(p_1-1)(p_2-1)}.
    $$
    Since $n>5040$ we know that 
    $$
        e^\gamma \ln\ln\left(5040\right) < e^\gamma \log\log\left(p_1^{k_1}p_2^{k_2}\right).
    $$
    Now choose $p_1=2$ and $p_2=3$. We see that 
    $$
        \dfrac{2\cdot 3}{1\cdot 2} = 3 < e^\gamma \log\log\left(5040\right) \approx 3.817.
    $$
    Since the above inequality holds, we know that the inequality must hold for all values $n=2^{k_1}3^{k_2}$. 
    
    By \textbf{Theorem $2.2$}, since $2, 3$ are the first $2$ distinct primes, and the inequality holds for all $n=2^{k_1}3^{k_2}$, the inequality holds for all $n=p_1^{k_1}p_2^{k_2}$ where $p_1, p_2$ are distinct primes.
\end{proof}

And just like the above proof, we could do it again for $3$ distinct primes

\begin{corollary}
    All numbers of the form $n=p_1^{k_1}p_2^{k_2}p_3^{k_3}$ satisfy Robin's inequality
\end{corollary}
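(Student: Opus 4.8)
The plan is to follow the template of Corollary 2.1 verbatim, now with three prime factors. I would begin from the product form of Robin's inequality,
$$
\prod_{j=1}^{3}\dfrac{p_j^{k_j+1}-1}{p_j^{k_j}(p_j-1)} < e^{\gamma}\log\log\left(p_1^{k_1}p_2^{k_2}p_3^{k_3}\right),
$$
and bound the left-hand side uniformly in the exponents. Dropping the $-1$ in each numerator gives $\dfrac{p_j^{k_j+1}-1}{p_j^{k_j}(p_j-1)} < \dfrac{p_j^{k_j+1}}{p_j^{k_j}(p_j-1)} = \dfrac{p_j}{p_j-1}$, so the entire left-hand side is strictly less than $\dfrac{p_1}{p_1-1}\cdot\dfrac{p_2}{p_2-1}\cdot\dfrac{p_3}{p_3-1}$, a quantity independent of $k_1,k_2,k_3$.

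Next I would invoke Theorem 2.2 to reduce to the smallest admissible triple of primes. Since $2,3,5$ are the first three distinct primes, it suffices to verify the inequality for $n=2^{k_1}3^{k_2}5^{k_3}>5040$; the theorem then promotes it to every $n=p_1^{k_1}p_2^{k_2}p_3^{k_3}$ with distinct primes $p_1,p_2,p_3$. For $(p_1,p_2,p_3)=(2,3,5)$ the bound above becomes
$$
\dfrac{2}{1}\cdot\dfrac{3}{2}\cdot\dfrac{5}{4} = \dfrac{30}{8} = 3.75.
$$
Because the right-hand side $e^{\gamma}\log\log(n)$ is monotonically increasing and $n>5040$, it is at least $e^{\gamma}\log\log(5040)\approx 3.817$. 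The chain $3.75 < 3.817$ closes the argument for $n=2^{k_1}3^{k_2}5^{k_3}$, and Theorem 2.2 finishes the rest.

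The step I expect to matter most is the numerical comparison itself, because the margin is far tighter than in the two-prime case: there the left side was bounded by $3$ against $3.817$, whereas here $3.75$ sits only a few hundredths below $3.817$. This thin gap is exactly why the method does not push through to four primes, since the analogous product $\dfrac{2}{1}\cdot\dfrac{3}{2}\cdot\dfrac{5}{4}\cdot\dfrac{7}{6} = \dfrac{35}{8} = 4.375$ already exceeds $3.817$, so no uniform constant bound survives past three primes. Consequently I would keep the three-prime arithmetic exact rather than approximate, and flag that any extension requires either a larger threshold than $5040$ or a sharper bound that does not discard the $-1$ terms in the numerators.
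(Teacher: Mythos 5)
Your proposal is correct and takes essentially the same route as the paper, whose proof of this corollary simply says the argument of the two-prime case carries over and records the identical computation $\frac{2\cdot 3\cdot 5}{1\cdot 2\cdot 4} = 3.75 < e^\gamma\log\log(5040) \approx 3.817$; you have merely made explicit the steps (dropping the $-1$ terms, reducing to the triple $(2,3,5)$ via Theorem 2.2) that the paper leaves implicit by reference to Corollary 2.1. Your closing observation that the four-prime product $\frac{2}{1}\cdot\frac{3}{2}\cdot\frac{5}{4}\cdot\frac{7}{6} = 4.375$ breaks the uniform bound also matches the paper's later discussion that any counterexample must have at least four distinct prime factors.
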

\begin{proof}
    The proof can be constructed similarly to the above, so that our inequality becomes
    $$
        \dfrac{2\cdot 3\cdot 5}{1\cdot 2 \cdot 4}\ = 3.75 < e^\gamma \log\log\left(5040\right) \approx 3.817.
    $$
    which is also true.
\end{proof}

Now lets see the boundary for which the bound of $e^gamma\log\log(5040)$ can be used. If we restrict ourselves to only first powers of primes, we can extend the product out as follows

\begin{corollary}
    All numbers of the form $n = p_1p_2p_3\dots p_m$ where $m \leq 9$ satisfies Robin's inequality.
\end{corollary}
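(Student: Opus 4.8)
The plan is to reduce the statement to a single numerical check by identifying the worst case and then exploiting two monotonicities. Since we restrict to first powers, set every $k_j = 1$. Each factor of the left-hand side telescopes,
$$
\frac{p_j^2 - 1}{p_j(p_j - 1)} = \frac{(p_j - 1)(p_j + 1)}{p_j(p_j-1)} = \frac{p_j + 1}{p_j},
$$
so that Robin's inequality for a squarefree $n = p_1 p_2 \cdots p_m > 5040$ becomes
$$
\prod_{j=1}^m \frac{p_j + 1}{p_j} < e^\gamma \log\log\left(\prod_{j=1}^m p_j\right).
$$

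First I would invoke \textbf{Theorem $2.2$} to pin down the hardest case. Increasing any prime strictly decreases the left-hand side and increases the right-hand side, so among all choices of $m$ distinct primes the inequality is most difficult to satisfy when the primes are smallest, namely the first $m$ primes $2, 3, 5, \ldots$; it therefore suffices to verify it when $n$ is the $m$-th primorial. Moreover, each factor $\frac{p_j+1}{p_j}$ exceeds $1$, so the left-hand side is monotonically increasing in $m$. Hence the single hardest configuration among all $m \leq 9$ is $m = 9$, using the primes $2, 3, 5, 7, 11, 13, 17, 19, 23$.

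Next I would bound the right-hand side from below exactly as in \textbf{Corollary $2.3$}: for any admissible $n > 5040$, monotonicity of $\log\log$ gives $e^\gamma \log\log(n) > e^\gamma \log\log(5040) \approx 3.817$. The corollary then collapses to the finite computation
$$
\prod_{j=1}^{9} \frac{p_j+1}{p_j} = \frac{3}{2}\cdot\frac{4}{3}\cdot\frac{6}{5}\cdot\frac{8}{7}\cdot\frac{12}{11}\cdot\frac{14}{13}\cdot\frac{18}{17}\cdot\frac{20}{19}\cdot\frac{24}{23} \approx 3.748,
$$
which is smaller than $3.817$. Chaining the two bounds closes the argument simultaneously for every $m \leq 9$.

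The main obstacle, and the reason the constant $9$ appears, is that this crude bound is essentially sharp: the fixed lower bound $e^\gamma \log\log(5040)$ has almost no room to spare. Inserting the tenth prime multiplies the product by $\frac{30}{29}$, raising it to roughly $3.877 > 3.817$, at which point the constant bound no longer dominates. Thus the delicate point is not the arithmetic itself but recognizing that $m = 9$ is exactly where the telescoped product crosses the threshold; pushing the result to larger $m$ would require replacing the value $\log\log(5040)$ with the genuine growth of $\log\log(n)$ for the relevant $n$.
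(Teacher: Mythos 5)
Your proof is correct and follows the paper's argument essentially step for step: telescope each factor to $\frac{p_j+1}{p_j}$, invoke \textbf{Theorem 2.2} to reduce to the first $m$ primes, bound the right-hand side below by $e^\gamma \log\log(5040) \approx 3.817$ using $n > 5040$, and verify that the product over the first nine primes is $\approx 3.748$. Your closing observation that the tenth prime multiplies the product by $\frac{30}{29}$, pushing it to $\approx 3.877$ and showing the constant $9$ is sharp for this method, is a correct refinement that the paper leaves implicit.
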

\begin{proof}
    Starting as normal, we begin with Robin's inequality in our form after a little bit of algebra
    $$
        \prod_{k=1}^m \dfrac{p_k + 1}{p_k} < e^\gamma  \log\log\left(5040\right) \implies \prod_{k=1}^m \dfrac{p_k + 1}{p_k} < 3.817.
    $$
    
    By \textbf{Theorem $2.2$}, we only need to consider the first $m$ primes in order to be sure that this inequality holds for all $n$ of this form. This is satisfied by the first $9$ primes which have the value of 
    $$
        \dfrac{(2 + 1)}{2}\dfrac{(3 + 1)}{3}\dfrac{(5 + 1)}{5}\dfrac{(7 + 1)}{7}\dfrac{(11 + 1)}{11}\dfrac{(13 + 1)}{13}\dfrac{(17 + 1)}{17}\dfrac{(19 + 1)}{19}\dfrac{(23 + 1)}{23} \approx 3.748
    $$
    Since $3.748 < 3.817$, this holds for $9$ primes of the first power, and trivially for any $m < 9$ with only single powers as well.
\end{proof}

\section{Discussion and Further Work}

With Theorem $2.2$ we showed that given any prime number string $>5040$ that satisfies Robin's inequality, will still satisfy Robin's inequality if any of those prime numbers in increased. As a consequence of this, one only needs to consider the first $m$ distinct prime numbers when considering prime products of certain sizes, as all other products will hold true. Interestingly this leads us to the next question of whether or not all prime strings of single powers hold true for Robin's inequality. We conjecture the following: 

\begin{conjecture}
    For any $n>5040$ where $n$ is the product of the first $m$ distinct primes, Robin's Inequality holds true. 
\end{conjecture}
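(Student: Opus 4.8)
To establish this conjecture, the plan is to compare the two sides of Robin's inequality directly for the primorial $n = p_1 p_2 \cdots p_m$ and to exhibit a constant, strictly less than $1$, to which the ratio of the left- to the right-hand side converges; the inequality then holds for all large $m$ by a comfortable margin, and only finitely many small cases remain, which can be settled by direct computation. Since a primorial is squarefree, the left-hand side of the product form of the inequality established in the introduction collapses to
\[
\prod_{j=1}^m \frac{p_j+1}{p_j} = \prod_{p \le p_m}\left(1 + \frac1p\right),
\]
exactly as in the single-power corollaries above, while the right-hand side is $e^\gamma \log\left(\log n\right) = e^\gamma \log\theta(p_m)$, where $\theta(x) = \sum_{p \le x}\log p$ is Chebyshev's function. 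So the whole statement reduces to the single inequality $\prod_{p\le p_m}(1+1/p) < e^\gamma\log\theta(p_m)$ for every $m$ with $p_1\cdots p_m > 5040$, that is, for $m \ge 6$.

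Next I would control the left-hand side using Mertens' theorems. Writing $1 + 1/p = (1 - p^{-2})(1 - 1/p)^{-1}$ factors the product as
\[
\prod_{p\le p_m}\left(1+\frac1p\right) = \left(\prod_{p\le p_m}\left(1 - \frac1{p^2}\right)\right)\left(\prod_{p\le p_m}\left(1-\frac1p\right)^{-1}\right).
\]
The first factor is a decreasing partial product converging to $\prod_p(1-p^{-2}) = 6/\pi^2 \approx 0.608$, so it is bounded above by its value at the first prime, namely $3/4$. For the second factor I would invoke an explicit form of Mertens' third theorem — for instance the Rosser--Schoenfeld bound $\prod_{p\le x}(1-1/p)^{-1} < e^\gamma\log x\,(1 + 1/\log^2 x)$, valid for $x>1$ — to obtain $\prod_{p\le p_m}(1+1/p) < \tfrac34\, e^\gamma \log p_m\,(1 + 1/\log^2 p_m)$.

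It then remains to bound the right-hand side from below, which only requires a lower bound on $\theta(p_m)$: by the prime number theorem in explicit form one has $\theta(x) > c\,x$ on the relevant range for a fixed $c$ close to $1$, so $\log\theta(p_m) > \log p_m - C$ for an explicit constant $C$. Comparing the two estimates, the desired inequality follows once $\tfrac34\log p_m\,(1 + 1/\log^2 p_m) < \log p_m - C$, i.e. once $\tfrac14\log p_m$ exceeds a bounded quantity; this holds for all $p_m$ beyond an explicit threshold $x_0$. Finally I would verify by direct numerical evaluation the finitely many primorials with $5040 < p_1\cdots p_m$ and $p_m < x_0$, completing the argument. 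The asymptotic margin of roughly $1 - 6/\pi^2 \approx 0.39$ makes it plausible that no primorial fails, consistent with the conjecture.

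The \emph{main obstacle} is quantitative rather than conceptual: one must choose the explicit constants in the Mertens and Chebyshev estimates so that they are simultaneously valid over the whole range and produce a threshold $x_0$ small enough for the finite verification to be carried out in practice. If the crude factor $3/4$ together with loose forms of the two classical estimates pushes $x_0$ too high, the remedy is to replace $3/4$ by a sharper upper bound for $\prod_{p\le x}(1-p^{-2})$ (driving the constant toward $6/\pi^2$) and to use tighter explicit Mertens and prime-counting bounds, lowering the crossover until the residual computation becomes feasible.
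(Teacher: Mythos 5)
There is no proof in the paper to compare against: the statement is left open as Conjecture 3.1, supported only by numerical evidence (the plots and the observed ratio tending to roughly $1.63$), and the authors even remark that the dependence on the distribution of primes seems to block an inductive proof. Your proposal is therefore doing something genuinely different --- it sketches an actual unconditional proof, and the skeleton is sound. The reduction to $\prod_{p\le p_m}(1+1/p) < e^\gamma\log\theta(p_m)$ is exactly right (with $m\geq 6$, since $2\cdot3\cdot5\cdot7\cdot11=2310\leq 5040 < 30030$), the factorization $1+1/p=(1-p^{-2})(1-1/p)^{-1}$ is the correct move, and the two explicit ingredients you invoke --- the Rosser--Schoenfeld form of Mertens' third theorem and an explicit lower bound $\theta(x)>cx$ --- are both available in the literature with constants easily strong enough that the residual finite check is tiny (with $c=0.84$ valid for $x\geq 101$ one only needs to verify primorials with $p_m<101$, i.e.\ $m\leq 25$, directly). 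Your analysis also explains the paper's empirical observation: since the left side is asymptotic to $\tfrac{6}{\pi^2}e^\gamma\log p_m$ and the right side to $e^\gamma\log p_m$, the ratio the authors see converging to $\approx 1.63$ is precisely $\pi^2/6\approx 1.645$, which turns their heuristic into a theorem-shaped statement. It is worth noting that this is essentially how the question is settled in the literature --- indeed the paper's own reference on fifth-power-free numbers (Choie, Lichiardopol, Moree, Sol\'e, 2007) proves Robin's inequality for all squarefree $n>30$, which subsumes this conjecture --- so your route not only avoids the induction the authors thought necessary, it closes the conjecture outright; what remains in your write-up is purely bookkeeping (fixing one consistent set of constants and ranges of validity), not a conceptual gap.
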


Computationally, we believe this to be true, as it effectively asks for the truth of the following statement
$$
\prod_{j=1}^{m}\dfrac{p_j+1}{p_j} < e^\gamma\log \left(\log\left(\prod_{j=1}^m p_j\right)\right).
$$
From the following figures $1$ and $2$ one can see that there appears to be a very clear difference between the two growth rates of the inequalities. If we denote the value of the $m^{th}$ partial product as $q_m$ and $\alpha_m$ for the left and right hand sides respectively, the ratio $\frac{\alpha_m}{q_m}$ appears to monotonically approach a limit of $\approx 1.63$, which would prove the conjecture.

\begin{figure}[h]
    \centering
    \begin{minipage}{0.45\textwidth}
        \centering
        \includegraphics[width=0.9\textwidth]{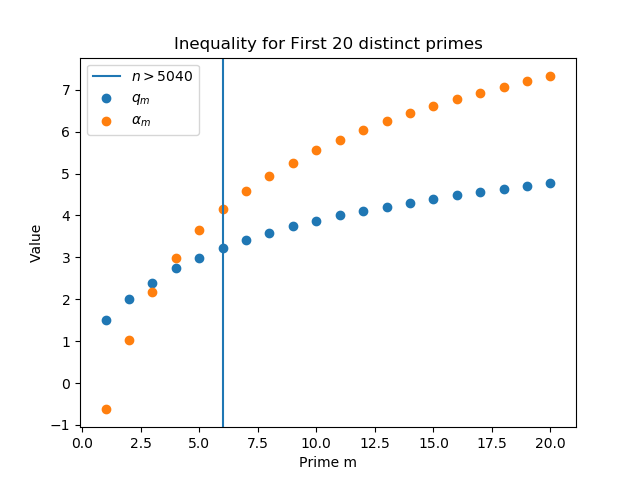} 
    \end{minipage}\hfill
    \begin{minipage}{0.45\textwidth}
        \centering
        \includegraphics[width=0.9\textwidth]{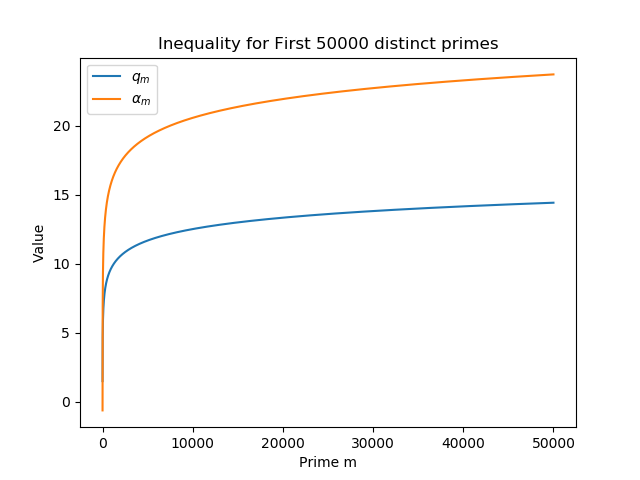} 
    \end{minipage}
    \caption{In these figures you can clearly see that the values of the right hand side of the inequality outpace the growth of the right hand side of the inequality in a way that seems to imply that the inequality holds. A horizontal line is drawn to distinguish where $n>5040$ for the first $m$ distinct primes.}
\end{figure}

While this does appear like it would be an easy limit difference to prove, the problem lies in the requirement of the distribution of primes, making a proof by induction seemingly impossible. However, if this was shown to be true we would only need one final piece to show that Robin's inequality holds for all possible numbers $n>5040$. Consider the following conjecture. 

\begin{conjecture}
    For any $n>5040$ where $n= p_1^{k_1}p_2^{k_2}\dots p_m^{k_m}$, then $n= p_1^{k_1}p_2^{k_2}\dots p_j^{k_j+1}\ldots p_m^{k_m}$ is also valid for all values of $j$
\end{conjecture}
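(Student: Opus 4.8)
The plan is to imitate the structure of Theorem $2.2$, but with one essential difference: when we raise an exponent $k_j$ to $k_j+1$ the right-hand side again grows (since $n$ is multiplied by $p_j$), yet now the left-hand side grows as well. Writing $f(k) = \frac{p^{k+1}-1}{p^k(p-1)} = \frac{p}{p-1} - \frac{1}{p^k(p-1)}$ for the single-prime factor, one sees immediately that $f$ is \emph{strictly increasing} in $k$, so the sign argument that settled Theorem $2.2$ is unavailable here. Instead the two increments must be compared quantitatively.

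First I would record the increments exactly. A short computation gives the clean identity $f(k+1) - f(k) = \frac{1}{p^{k+1}}$, so incrementing the exponent of $p_j$ changes the left-hand side by $\Delta L = \left(\prod_{i \neq j} f(k_i)\right)\frac{1}{p_j^{k_j+1}}$, while the right-hand side changes by $\Delta R = e^\gamma \log\left(1 + \frac{\log p_j}{\log n}\right)$. Since the hypothesis supplies $L_{\text{old}} < R_{\text{old}}$, it suffices to prove the incremental inequality $\Delta L \le \Delta R$; then $L_{\text{new}} = L_{\text{old}} + \Delta L < R_{\text{old}} + \Delta R = R_{\text{new}}$, which is exactly the claim.

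The crux is therefore to bound the prefactor $\prod_{i \neq j} f(k_i)$. Using the assumed inequality one has the crude estimate $\prod_{i \neq j} f(k_i) = L_{\text{old}}/f(k_j) < e^\gamma \log\log n$, and combining this with $\log(1+x) \ge \frac{x}{1+x}$ reduces $\Delta L \le \Delta R$ to a condition of the shape $p_j^{k_j+1} \gtrsim \frac{\log n\,\log\log n}{\log p_j}$. For numbers whose prime support is spread out—say a genuine prime power times a few large primes—this holds comfortably and the step goes through.

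The main obstacle is precisely the opposite regime, when $p_j$ is small (e.g.\ $p_j=2$, $k_j=1$) while $n$ carries many other prime factors: then $p_j^{k_j+1}$ is tiny and $\log n\,\log\log n$ is large, so the crude bound fails, and Mertens' theorem in fact shows that for primorial-type $n$ the prefactor $\prod_{i\neq j} f(k_i)$ sits genuinely close to its trivial ceiling, so no purely local estimate can close the gap. The honest remedy is to keep the full slack: since $R_{\text{new}} - L_{\text{new}} = (R_{\text{old}} - L_{\text{old}}) + (\Delta R - \Delta L)$, it is enough to show $\Delta L - \Delta R$ is smaller than the existing margin $R_{\text{old}} - L_{\text{old}}$, and for primorial-type inputs the Mertens product $\prod_{p\le x}\frac{p+1}{p} \sim \frac{6}{\pi^2} e^\gamma \log x$ shows this margin is substantial ($L/R \to 6/\pi^2$, i.e.\ $R/L \to \pi^2/6 \approx 1.645$, matching the limit $\approx 1.63$ observed for Conjecture $3.1$). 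The difficulty then concentrates entirely on the near-extremal (superabundant) numbers, where this margin is thinnest; controlling it there demands exactly the fine information on the distribution of the $p_i$ that the paper flags, and indeed, together with Theorem $2.2$ and Conjecture $3.1$, a proof of this step would establish Robin's inequality for every $n>5040$ and hence the Riemann Hypothesis—so one should expect this final estimate to be correspondingly deep.
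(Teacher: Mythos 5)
The statement you were given is Conjecture 3.2, and the paper contains no proof of it at all --- it is offered as an open problem, with the explicit remark that together with Conjecture 3.1 and Theorem 2.2 it would yield Robin's inequality for all $n>5040$, and hence the Riemann Hypothesis. So the honest assessment is not ``same approach'' or ``different approach'' but: your proposal does not prove the statement, and you essentially say so yourself in the final paragraph. That said, your local analysis is correct and genuinely clarifies why the conjecture is hard. The increment identity $f(k+1)-f(k)=\frac{1}{p^{k+1}}$ checks out, as does $\Delta R = e^\gamma\log\bigl(1+\frac{\log p_j}{\log n}\bigr)$; the reduction of the sufficient condition $\Delta L\le\Delta R$, via $\log(1+x)\ge\frac{x}{1+x}$, to $p_j^{k_j+1}\gtrsim \frac{\log n\,\log\log n}{\log p_j}$ is sound; and your Mertens computation correctly shows this incremental route fails outright for primorial-type $n$ with $p_j=2$, where $\Delta L$ grows like $\frac{1}{\pi^2}e^\gamma\log x$ while $\Delta R$ decays like $e^\gamma\frac{\log 2}{\theta(x)}$. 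Your observation that $R/L\to\pi^2/6\approx 1.645$ for primorials, matching the paper's empirical $\approx 1.63$ in Conjecture 3.1, is a nice cross-check and arguably sharpens the paper's own discussion.

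The genuine gap is concentrated in your ``honest remedy'': replacing $\Delta L\le\Delta R$ by the slack condition $\Delta L-\Delta R< R_{\text{old}}-L_{\text{old}}$ is a valid reformulation, but you verify it only asymptotically and only for primorials, then concede the near-extremal (superabundant) regime. That concession is not a loose end --- it is the entire content of the conjecture. Robin's inequality is decided precisely on superabundant-type integers, where $L/R$ approaches $1$ and the margin you need to control becomes arbitrarily thin relative to the increments; bounding it there requires fine information about the distribution of the primes $p_i$ (effectively the kind of input equivalent to what one is trying to prove), which is exactly why the paper leaves this as a conjecture and flags that proving it, with the companion pieces, would settle the Riemann Hypothesis. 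So no refinement of the purely local estimates you use --- bounds on $f$, Mertens products, $\log(1+x)$ inequalities --- can close the argument; any complete proof must inject new global information at the extremal numbers. One further small reading point: the conjecture's hypothesis should be read as ``$n$ satisfies Robin's inequality,'' with ``valid'' meaning the same for the augmented number; you interpreted it that way, which is the intended reading, but if you write this up you should state that hypothesis explicitly, since as literally printed the statement omits it.
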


If this and the prior conjecture were both true, it could be used in conjunction with Theorem $2.2$ to show that all possible prime products $>5040$ held true for the inequality as well. Base cases would need to be considered to avoid the potential problem with counterexamples when $n\leq5040$, however that would be expected to be trivial if one was to prove Conjectures $3.1$ and $3.2$. All counterexamples would also have to have between $4\leq m\leq 9$ distinct prime factors per the proofs of Corollaries $2.2$ and $2.3$, which means that only a small number of cases need be considered.

\nocite{*} 
\bibliographystyle{plain} 
\bibliography{Properties_of_Prime_Products_and_Robins_Inequality}

\begin{thebibliography}{1}

\bibitem{berndt2012ramanujan}
Bruce~C Berndt.
\newblock {\em Ramanujan’s notebooks}.
\newblock Springer Science \& Business Media, 2012.

\bibitem{JTNB_2007__19_2_357_0}
YoungJu Choie, Nicolas Lichiardopol, Pieter Moree, and Patrick Sol\'e.
\newblock On robin's criterion for the riemann hypothesis.
\newblock {\em Journal de th\'eorie des nombres de Bordeaux}, 19(2):357--372,
  2007.

\bibitem{robin1984grandes}
G~ROBIN.
\newblock Grandes valeurs de la fonction somme qes diviseurs et, hypoth{\`e}se
  deriemann.
\newblock 1984.

\bibitem{weisstein2002euler}
Eric~W Weisstein.
\newblock Euler-mascheroni constant.
\newblock 2002.

\end{thebibliography}

\end{document}